\numberwithin{equation}{section}
\numberwithin{figure}{section}
\newtheorem{theorem}{Theorem}
\newtheorem{proposition}{Proposition}
\newtheorem{remark}{Remark}
\newcommand{\Rn}{\mathbb{R}^n}
\newcommand{\R}{\mathbb{R}}
\newcommand{\Om}{\Omega}
\newcommand{\pt}{\partial}
\numberwithin{equation}{section}
\numberwithin{theorem}{section}
\numberwithin{proposition}{section}
\numberwithin{definition}{section}
\numberwithin{corollary}{section}
\numberwithin{lemma}{section}
\numberwithin{example}{section}
\begin{document}
{\parindent0pt
%
%
%
%
%
%
%
%
%
        \title{On Some Nonlocal in Time and Space Parabolic Problem}
        \author{Sandra Carillo\footnote{Dipartimento Scienze di Base e Applicate
            per l'Ingegneria  ``\textsc{La Sapienza}''   Universit\`a di Roma,  16, Via A. Scarpa, 00161 Rome, Italy}
        ~~{}\footnote{Gr. Roma1, IV - Mathematical Methods in NonLinear Physics, National Institute for Nuclear Physics (I.N.F.N.), Rome, Italy} and Michel Chipot  \footnote{Institute of   Mathematics, University of Z\"urich, Winterthurerstr.190, CH-8057 Z\" urich, email : m.m.chipot@math.uzh.ch} ~{}\footnote{FernUni Schweiz, Schinerstrasse 18, CH-3900 Brig-Glis}}
        \date{ }
        \maketitle
\begin{abstract} The goal of this note is to study  nonlinear parabolic problems nonlocal in time and space. 
We first establish the existence of a solution and its uniqueness in certain cases. Finally we consider its asymptotic behaviour.
\end{abstract}

\noindent
{\bf MSC2020-Mathematics Subject Classification: 35A01, 35D30, 35F25, 35K10}.\\[0.2cm]

\noindent{\bf Key words:} Nonlinear Parabolic Equations, Nonlocal in Time, Nonlocal in Space, Existence and Uniqueness, Stationary Problem.

\


\section{Introduction and notation} 

We will denote by $\Om$ a smooth bounded open set of $\Rn$, $n\geq 2$ with boundary $\partial \Om$. We would like to consider the following problem. 
Find $u = u(x,t)$ such that 
 \begin{equation}\label{1.1}
  \begin{cases} u_t -  \alpha \big (\int_\Om g(x)u(x,t) dx \big )\Delta u  + \beta \big (\int_0^t h(s)u(x,s) ds \big )u = f ~~{\text in }~ \Om \times (0,T), \cr
  u(\cdot,t) = 0 ~{\text on }~ \partial \Om,~ t\in (0,T), ~u(x,0) = u_0(x).
  \end{cases}
 \end{equation}
$T$ is a positive number, $u_0, f, g, h$ are given data. The equation could be regarded as a model of population dynamics where $u(x,t)$ is the density of a 
population at the location $x$, at the time $t$. The nonlinear terms are an account for a death or diffusion  rate which at time $t$ depends on the total population having been at the location $x$ in the past or in $\Om$ at time $t$.  To study this issue we were inspired by the papers  \cite{S1}-\cite{Walker} where a similar problem was introduced at the difference that in \eqref{1.1} the integral goes 
up to $T$ which we think is an interesting point of view but perhaps a bit surprising from a realistic one in our framework.

The paper is divided as follows. In the next section we prove existence of a weak solution to \eqref{1.1}. In the subsequent section we establish a result of uniqueness. 
Note that in comparison to \cite{S1}-\cite{Walker}  our result is global. Finally we study in a simple case the asymptotic behaviour of the solution to \eqref{1.1}.

\section{A  result of existence} 
\vskip .4 cm 
We denote by $L^p(\Om), 1 \leq p \leq  +\infty$ the usual $L^p$-space on $\Om$. It is equipped with its usual norm and for instance, in the case where $p=2$, we denote it by $|~|_2$ i.e. 
\begin{equation*}
|v|_2^2 = \int_\Om v(x)^2 dx ~~ \forall v \in L^2(\Om).
\end{equation*}
We refer the reader to \cite{CNew}-\cite{Evans} for the notation used in the sequel, for instance for $H_0^1(\Om)$ or its dual $H^{-1}(\Om)$ or the spaces $L^2(0,T;V)$, $L^2(0,T;V')$ when $V$ is a Banach space.
\vskip .3 cm

The main result of this section is the following :
\begin{theorem}\label{2.1} Set $V= H_0^1(\Om), V'= H^{-1}(\Om)$. Suppose 
 \begin{equation}\label{2.1}
 u_0 \in L^2(\Om), ~f \in L^2(0,T;V'), ~\alpha, \beta \in C(\R)\cap L^\infty(\R), ~g\in  L^\infty(\Om), h \in  L^\infty(0,T), ~\forall T,
 \end{equation}
 and that for some positive constant $a$ one has
  \begin{equation}\label{2.2}
0< a \leq \alpha.
  \end{equation}

Then there exists a weak solution to \eqref{1.1}. $C(\R)$ denotes the space of continuous functions.
\end{theorem}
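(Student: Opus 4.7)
The plan is to establish existence via the Faedo--Galerkin method, which adapts well to the two nonlocal nonlinearities. Let $\{w_j\}_{j\geq 1}$ denote the $L^2(\Om)$-orthonormal basis of Dirichlet eigenfunctions of $-\Delta$ and set $V_m=\mathrm{span}\{w_1,\dots,w_m\}$. First I would seek approximate solutions
\[
u_m(x,t)=\sum_{j=1}^m c_j^m(t)\,w_j(x)
\]
satisfying, for each $j=1,\dots,m$,
\[
(u_m',w_j)+\alpha\!\Big(\int_\Om g\, u_m\,dx\Big)(\nabla u_m,\nabla w_j)+\Big(\beta\!\big(\int_0^t h(s)\, u_m(\cdot,s)\,ds\big)u_m,\,w_j\Big)=\langle f,w_j\rangle,
\]
with $u_m(0)$ the $L^2$-projection of $u_0$ onto $V_m$. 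This is a Volterra integro-differential system for the coefficients $(c_j^m)$ with continuous right-hand side; Peano's theorem combined with the global boundedness of $\alpha$ and $\beta$ (which, together with the energy estimate below, rules out blow-up) produces a solution on the whole interval $[0,T]$.

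Testing the Galerkin identity against $u_m$ itself, exploiting $\alpha\geq a>0$ and $|\beta|_\infty<\infty$ and using Young's inequality on the duality pairing with $f$, one would obtain a differential inequality of the form
\[
\tfrac12\tfrac{d}{dt}|u_m|_2^2+\tfrac{a}{2}\,|\nabla u_m|_2^2\leq |\beta|_\infty|u_m|_2^2+C\|f\|_{V'}^2,
\]
to which Gr\"onwall's lemma applies, giving a bound on $u_m$ in $L^\infty(0,T;L^2(\Om))\cap L^2(0,T;V)$ uniform in $m$. Feeding these estimates back into the equation together with the boundedness of $\alpha,\beta$ then bounds $u_m'$ in $L^2(0,T;V')$. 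Banach--Alaoglu and the Aubin--Lions lemma extract a subsequence (not relabeled) converging weakly in $L^2(0,T;V)$, strongly in $L^2(\Om\times(0,T))$ and a.e.\ on $\Om\times(0,T)$, to a candidate $u$ in the natural energy space.

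The main obstacle is passing to the limit in the two nonlocal coefficients. For the $\alpha$-term, $\int_\Om g\, u_m\,dx\to \int_\Om g\, u\,dx$ in $L^2(0,T)$ and a.e., so continuity and boundedness of $\alpha$ give, via dominated convergence, $\alpha(\int_\Om g\, u_m\,dx)\to\alpha(\int_\Om g\, u\,dx)$ strongly in every $L^p(0,T)$, $p<\infty$; combined with the weak limit of $\nabla u_m$ in $L^2(\Om\times(0,T))$ this identifies the limit of the diffusive term distributionally. For the $\beta$-term, the linear map $v\mapsto \int_0^\cdot h(s)v(\cdot,s)\,ds$ is continuous from $L^2(\Om\times(0,T))$ into $C([0,T];L^2(\Om))$, so the inner integral converges a.e.\ on $\Om\times(0,T)$; the continuity and boundedness of $\beta$ together with strong convergence of $u_m$ in $L^2(\Om\times(0,T))$ and dominated convergence yield $\beta\!\big(\int_0^t h\, u_m\,ds\big)u_m\to \beta\!\big(\int_0^t h\, u\,ds\big)u$ in $L^2(\Om\times(0,T))$. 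Taking the limit $m\to\infty$ in the Galerkin identity tested against a fixed $w_j$, and then extending by density to arbitrary test functions in $V$, produces the weak formulation of \eqref{1.1}.
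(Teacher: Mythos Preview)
Your Galerkin argument is correct in substance. Two small technical points deserve a remark: the finite-dimensional system is a Volterra integro-differential equation (through the $\int_0^t h\,u_m\,ds$ term) with right-hand side that is only $L^2$ in $t$ (since $f\in L^2(0,T;V')$), so classical Peano does not apply verbatim; one needs the Carath\'eodory--Volterra version, which is of course equally standard. You also did not explicitly recover the initial condition $u(0)=u_0$ in the limit, but that follows routinely from the weak convergences you already have.

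The paper takes a genuinely different route. Rather than Galerkin, it first observes that the exponential substitution $\tilde u=e^{-\lambda t}u$ reduces matters to the case $\beta\geq 1$ (so no Gr\"onwall step is needed in the a priori estimate), and then sets up a Schauder fixed point: for each $w\in L^2(0,T;L^2(\Om))$ one solves the \emph{linear} parabolic problem obtained by freezing both nonlocal arguments at $w$, and calls the solution $S(w)$. The energy estimate shows $S$ maps a fixed ball of $L^2(0,T;L^2(\Om))$ into itself, the range is relatively compact by Aubin--Lions, and continuity of $S$ is checked by precisely the same strong/weak convergence analysis of the nonlocal coefficients that you carried out. Your approach avoids both the reduction trick and the auxiliary linear problem, at the price of a slightly more delicate finite-dimensional existence step and a Gr\"onwall factor in the estimates; the paper's approach buys a cleaner, $m$-independent a priori bound and packages the nonlinearity entirely into a compactness/continuity verification for a frozen-coefficient map.
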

\begin{proof} 1. One can assume that $\beta \geq 1$.

Indeed, suppose that we can solve \eqref{1.1} in this case. $u$ is solution to \eqref{1.1} iff
\begin{equation}\label{2.3}
{\tilde u}= e^{-\lambda t }u
 \end{equation}
satisfies 
\begin{equation*}
\begin{aligned}
&(e^{\lambda t }{\tilde u})_t - \alpha \big (\int_\Om g(x)e^{\lambda t }{\tilde u}(x,t) dx\big )e^{\lambda t }\Delta {\tilde u} + \beta \big (\int_0^t h(s)e^{\lambda s }{\tilde u}(x,s) ds \big )e^{\lambda t }{\tilde u}= f, \cr
\Leftrightarrow ~~& e^{\lambda t }{\tilde u}_t + \lambda e^{\lambda t }{\tilde u}-\alpha \big (\int_\Om g(x)e^{\lambda t }{\tilde u}(x,t) dx\big )e^{\lambda t }\Delta {\tilde u} + \beta \big (\int_0^t h(s)e^{\lambda s }{\tilde u}(x,s) ds \big )e^{\lambda t }{\tilde u} = f, \cr
\Leftrightarrow ~~& {\tilde u}_t  -\alpha \big (\int_\Om g(x)e^{\lambda t }{\tilde u}(x,t) dx\big )\Delta {\tilde u} +\big\{\lambda+ \beta \big (\int_0^t h(s)e^{\lambda s }{\tilde u}(x,s) ds\big) \big\}{\tilde u} =e^{-\lambda t } f, \cr
\end{aligned}
 \end{equation*}
i.e. iff ${\tilde u}$ satisfies \eqref{1.1} with $f,h, \alpha$ replaced respectively by $e^{-\lambda t }f,e^{\lambda t}h, \alpha(e^{\lambda t} .)$ and $\beta$ by $\lambda+\beta$ which is greater than 1 for $\lambda$ large enough.
\vskip .3 cm
2. We suppose that $\beta \geq 1$.
\vskip .2 cm

\noindent Let $w \in L^2(0,T;L^2(\Om)) \subset  L^1(0,T;L^1(\Om))$. Then, see \cite{CNew}, there exists a unique  $u=S(w)$ solution to
\begin{equation}\label{2.4}
\begin{cases} u\in L^2(0,T;V),~ u_t \in L^2(0,T;V'), \cr
  \frac{d}{dt}(u,v) + &\hskip -4.5 cm \alpha \big (\int_\Om g(x)w(x,t) dx \big )\int_\Om\nabla u \cdot \nabla v dx  + \big(\beta \big (\int_0^t h(s)w(x,s) ds \big )u,v\big)\cr 
  & ~~~~~~~~~~~~= \langle f,v \rangle ~~\forall v \in H_0^1(\Om), \text{ in } {\cal{D}'}(0,T).\cr
   \end{cases}
 \end{equation}
In the equation above we denote by $(~,~)$ the canonical scalar product in $L^2(\Om)$ and by $\langle~,~\rangle$ the duality between $H^{-1}(\Om)$ and $ H_0^1(\Om)$, ${\cal{D}}(0,T)$ and 
${\cal{D}'}(0,T)$ denote respectively the space of $C^\infty$ functions with compact support in $(0,T)$ and its dual, the usual space of distributions 
on $(0,T)$. (Cf. for instance \cite{CNew}). We will be done if we can show that $S$ has a fixed point. Taking in the equation above $v=u$ we get easily if $a\wedge1$ denotes the minimum of $a$ and 1
\begin{equation*}
\begin{aligned}
\frac{1}{2}\frac{d}{dt}|u|_2^2 + a\wedge1\int_\Om (\nabla u \cdot \nabla u + u^2) dx &\leq \langle f,u \rangle \leq |f|_{V'} \big| |\nabla u| \big|_2 \cr 
&\leq \frac{1}{2(a\wedge1)}|f|^2_{V'}+ \frac{a\wedge1}{2} \big| |\nabla u| \big|^2_2.\cr
\end{aligned}
 \end{equation*}
$|f|_{V'}$ denotes the strong dual norm of $f$ in $H^{-1}(\Om)$ associated to the norm $\big| |\nabla u| \big|_2$ in $ H_0^1(\Om)$. From this we derive 
\begin{equation*}
\frac{d}{dt}|u|_2^2 + (a\wedge1)\int_\Om (\nabla u \cdot \nabla u + u^2) dx \leq \frac{1}{a\wedge1}|f|^2_{V'}
 \end{equation*}
and after an integration in $t$
\begin{equation*}
|u|_2^2 +(a\wedge1 )\int_0^t\int_\Om (\nabla u \cdot \nabla u + u^2) dx ds \leq |u_0|^2_2 + \frac{1}{a\wedge1}\int_0^t |f(\cdot,s)|^2_{V'}ds.
 \end{equation*}
It follows that
\begin{equation*}
|u|_{L^2(0,T;V)}^2,~ |u|_{L^2(0,T;L^2(\Om))}^2 \leq C^2 =\frac{1}{(a\wedge1 )} \Big(|u_0|^2_2 + \frac{1}{a\wedge1}\int_0^T |f(\cdot,s)|^2_{V'}ds\Big).
 \end{equation*}
Set 
\begin{equation*}
B = \{v\in L^2(0,T;L^2(\Om)) ~|~|v|_{L^2(0,T;L^2(\Om))}\leq C \}.
 \end{equation*}
Clearly, $S$ maps $B$ into itself. Moreover since 
\begin{equation*}
u_t = \alpha \big (\int_\Om g(x)w(x,t) dx \big )\Delta u - \beta \big (\int_0^t h(s)w(x,s) ds \big )u + f ~\text{ in } V'
 \end{equation*}
$u_t$ is uniformly bounded in $L^2(0,T;V')$ and $S(B)$ is relatively compact in $B$. The existence of a weak solution to \eqref{1.1} will follow by the Schauder fixed point theorem if $S$ is continuous. To show that, let $w_n \in L^2(0,T;L^2(\Om))$ such that 
\begin{equation*}
w_n \to w \text{ in } L^2(0,T;L^2(\Om)).
 \end{equation*}
Denote  $u_n = S(w_n)$. The estimates above hold and one can extract a subsequence such that, if we still label it by $n$
\begin{equation}
\begin{aligned}
& gw_n \to g w ~\text{ in }~ L^2(0,T;L^2(\Om)), \cr
& hw_n \to h w ~\text{ in }~ L^2(0,T;L^2(\Om)), \cr
& u_n  \to u_\infty  ~\text{ in }~ L^2(0,T;L^2(\Om)),\cr
& \nabla u_ n \rightharpoonup  \nabla u_\infty ~\text{ in }~ L^2(0,T;L^2(\Om)),\cr
& (u_n)_t  \rightharpoonup (u_\infty)_t ~\text{ in }~ L^2(0,T;V'). \cr
\end{aligned}
 \end{equation}
By definition of $u_n$ we have for every $v\in H_0^1(\Om)$ and every $\varphi \in {\cal{D}}(0,T)$
\begin{equation*}
\begin{aligned} \int_0^T-(u_n,v)\varphi'(t) dt + & \int_0^T\varphi(t)\int_\Om\alpha \big (\int_\Om g(x)w(x,t) dx\big ) \nabla u_n \cdot \nabla v ~dxdt \cr &+  \int_0^T\varphi(t)\int_\Om \beta \big (\int_0^t h(s)w_n(x,s) ds \big )u_n v ~dxdt=  \int_0^T \langle f,v \rangle \varphi(t) dt.\cr
\end{aligned}
 \end{equation*}
By the Lebesgue theorem 
\begin{equation}\label{2.5}
\varphi(t) \beta \big (\int_0^t h(s)w_n(x,s) ds \big) v \to \varphi(t) \beta \big (\int_0^t h(s)w(x,s) ds \big) v~~\text{ in} ~ L^2(0,T;L^2(\Om)).
 \end{equation}
Indeed, note that 
\begin{equation*}
\begin{aligned}
|\int_0^t h(s)w_n(x,s) ds -& \int_0^t h(s)w(x,s) ds| \cr
&\leq \int_0^T |h|_\infty|w_n-w|(x,s)ds \cr
&~~~~~~~~~~~~~~~~\leq |h|_\infty \sqrt T \{ \int_0^T (w_n-w)^2(x,s)ds\}^\frac{1}{2}\to 0 ~~{\text a.e.}
\end{aligned}
 \end{equation*}
up to a subsequence. $|h|_\infty$ is the $L^\infty(0,T)$-norm of $h$. Using \eqref{2.5} and the analogue written for $\alpha$ and $g$, one can pass to the limit in the equation satisfied by $u_n$. It follows that $u_\infty=S(w)$. Since the limit of $u_n$ is unique the whole sequence $u_n$ converges toward $u_\infty= S(w)$ and thus $S$ is continuous. This completes the proof of the theorem.
\end{proof}
\begin{remark} The same existence result holds if in \eqref{1.1} one replaces the integral on $(0,t)$ by
\begin{equation*}
\int_0^{t'} h(s)u(x,s) ds
 \end{equation*}
 where $t'$ is any real number in $(0,T]$.
\end{remark}
\section{Uniqueness issue} 
\vskip .4 cm  
One has the following estimate for the solution to \eqref{1.1}:
\begin{proposition} Suppose that $u_0 \in L^\infty(\Om)$, $f \in L^\infty(\Om \times (0,T))$, $\beta \geq 1$. Then it holds 
\begin{equation}\label{3.1}
|u| \leq K = |f|_\infty\vee |u_0|_\infty.
\end{equation}
($\vee$ stands for the maximum of two numbers).
\end{proposition}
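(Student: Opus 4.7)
My plan is to adapt the classical Stampacchia truncation method to this nonlocal parabolic equation. Since the weak solution constructed in Theorem~\ref{2.1} enjoys $u\in L^2(0,T;V)$ with $u_t\in L^2(0,T;V')$, the standard Lions chain rule yields $\langle u_t,(u-K)^+\rangle = \frac{d}{dt}\frac{1}{2}|(u-K)^+|_2^2$ as distributions on $(0,T)$; moreover $(u-K)^+\in L^2(0,T;V)$ because $u|_{\pt\Om}=0\leq K$. I will therefore test the equation with $(u-K)^+$, and symmetrically with $-(u+K)_-$ for the lower bound, in order to derive an ordinary differential inequality for the $L^2$-norm of the excess.

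Set $A(t):=\alpha\big(\int_\Om g(x) u(x,t) dx\big)\geq a>0$ and $B(x,t):=\beta\big(\int_0^t h(s) u(x,s) ds\big)\geq 1$. Integration by parts produces the non-negative diffusion term $A(t)|\nabla (u-K)^+|_2^2$. The decisive point is the sign of the zero-order contribution $\int_\Om (Bu-f)(u-K)^+ dx$: on the level set $\{u>K\}$ one has $u>K\geq |f|_\infty\geq 0$, and since $B\geq 1$ yields $Bu\geq u > K\geq f$, the integrand is pointwise non-negative. Consequently $\frac{d}{dt}|(u-K)^+|_2^2\leq 0$, and since $K\geq |u_0|_\infty$ forces $(u_0-K)^+=0$, we conclude $(u-K)^+\equiv 0$, i.e., $u\leq K$ almost everywhere.

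The lower bound $u\geq -K$ follows by the symmetric test against $-(u+K)_-$: on $\{u<-K\}$ we have $u<0$, so $B\geq 1$ gives $-Bu\geq -u>K\geq -f$, producing again a favourably-signed zero-order contribution. Combining the two estimates yields $|u|\leq K$. The only real subtlety is the sign book-keeping in the presence of the nonlocal coefficients; but beyond the pointwise bounds $A>0$ and $B\geq 1$ no finer structure of the integrals defining $A$ and $B$ is used, which is precisely why the normalisation $\beta\geq 1$ -- assumed here as a hypothesis, and obtained in full generality by the exponential rescaling of Step~1 in the proof of Theorem~\ref{2.1} -- is exactly the right one for this maximum-principle argument to work.
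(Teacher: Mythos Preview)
Your proof is correct and follows essentially the same Stampacchia truncation argument as the paper: test the weak formulation against $(u-K)^+$, use $\alpha\geq a>0$ and $\beta\geq 1$ to control the signs, and conclude from $(u_0-K)^+=0$. The only cosmetic differences are that the paper rewrites the equation as $(u-K)_t-\nabla\cdot(\alpha\nabla(u-K))+\beta u-K=f-K\le 0$ and extracts a lower bound $((u-K)^+)^2$ from the zero-order term (leading to an unnecessary exponential factor $e^{2(a\wedge1)t}$), whereas you directly observe $(Bu-f)(u-K)^+\ge 0$; and for the lower bound the paper appeals to the symmetry $u\mapsto -u$ rather than testing with $-(u+K)_-$.
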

\begin{proof} One has 
\begin{equation*}
\frac{d}{dt}(u-K) - \nabla \cdot \Big( \alpha \big(\int_\Om gu~dx\big)\nabla (u-K)\Big)+ \beta \big(\int_0^t hu~ds\big) u - K = f -K \leq 0.
\end{equation*}
It follows, using as test function $(u-K)^+$
\begin{equation*}
\frac{1}{2}\frac{d}{dt}|(u-K)^+|_2^2 + a\wedge1\int_\Om |\nabla  (u-K)^+|^2 + ((u-K)^+)^2 \leq 0.
\end{equation*}
This implies that 
\begin{equation*}
\frac{d}{dt}\big(|(u-K)^+|_2^2 e^{2(a\wedge1)t} \big)\leq 0
\end{equation*}
and since this quantity vanishes at $0$ it vanishes for all time. This shows that $u\leq K$. Since $-u$ satisfies a similar equation one has also $-u\leq K$. This completes the proof of the proposition.
\end{proof}

One can then prove the following uniqueness result :
\begin{theorem} Suppose that $u_0 \in L^\infty(\Om)$, $f \in L^\infty(\Om \times (0,T))$, $g\in L^\infty(\Om),$ $h \in L^\infty(0.T)$. Suppose that $\beta\geq 1$, $\alpha$ are Lipschitz continuous in the sense that for some positive constant $C_\alpha$, $C_\beta$
\begin{equation}
|\alpha(\xi)- \alpha(\eta)| \leq C_\alpha|\xi-\eta|,~~~ |\beta(\xi)- \beta(\eta)| \leq C_\beta|\xi-\eta| ~~\forall \xi, \eta \in \R,
\end{equation}
then the weak solution to \eqref{1.1} is unique.
\end{theorem}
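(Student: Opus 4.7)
The plan is to take two weak solutions $u_1,u_2$ of \eqref{1.1} with the same data, set $w = u_1-u_2$, and prove $w\equiv 0$ via a differential inequality involving both a pointwise and a memory term in $|w(t)|_2^2$.

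First I would subtract the two weak formulations. Writing $A_i(t)=\int_\Omega g(x) u_i(x,t)\,dx$ and $B_i(x,t)=\int_0^t h(s) u_i(x,s)\,ds$, and adding/subtracting the reference values $\alpha(A_1)$ and $\beta(B_1)$, I would arrange the difference equation so that the leading coercive part reads
\begin{equation*}
\langle w_t,v\rangle + \alpha(A_1)\int_\Omega \nabla w\cdot\nabla v\,dx + \int_\Omega \beta(B_1)\,w\,v\,dx = R_\alpha(v) + R_\beta(v),
\end{equation*}
where $R_\alpha(v) = -(\alpha(A_1)-\alpha(A_2))\int_\Omega \nabla u_2\cdot\nabla v\,dx$ and $R_\beta(v) = -\int_\Omega (\beta(B_1)-\beta(B_2))u_2\,v\,dx$. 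Testing with $v=w$ and using $\alpha\geq a$, $\beta\geq 1$, the left side dominates $\tfrac12\frac{d}{dt}|w|_2^2 + a\,\||\nabla w|\|_2^2 + |w|_2^2$.

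Next I would control $R_\alpha$ and $R_\beta$. For the first, the Lipschitz assumption on $\alpha$ and Cauchy--Schwarz yield $|A_1-A_2|\leq |g|_\infty |\Omega|^{1/2}|w|_2$, so $|R_\alpha(w)|\leq C_\alpha |g|_\infty |\Omega|^{1/2} |w|_2 \||\nabla u_2|\|_2 \||\nabla w|\|_2$; Young's inequality absorbs the gradient of $w$ into the left side and leaves a term $\psi(t)|w|_2^2$ with $\psi(t)=C\||\nabla u_2|\|_2^2 \in L^1(0,T)$ by the energy estimate from Section 2. For the second, I use the a priori $L^\infty$ bound $|u_2|\leq K$ from the previous Proposition together with the Lipschitz property of $\beta$ to get
\begin{equation*}
|R_\beta(w)| \leq C_\beta K |h|_\infty \int_\Omega \Big(\int_0^t |w(x,s)|\,ds\Big)|w(x,t)|\,dx,
\end{equation*}
and then Cauchy--Schwarz in $s$ followed by Cauchy--Schwarz in $x$ gives a bound $\leq \tfrac12 |w(t)|_2^2 + C\int_0^t |w(s)|_2^2\,ds$.

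Combining these estimates, $y(t) := |w(t)|_2^2$ satisfies $y(0)=0$ and an integro-differential inequality of the form
\begin{equation*}
y'(t) \leq \Psi(t)\,y(t) + C\int_0^t y(s)\,ds \qquad\text{with } \Psi\in L^1(0,T).
\end{equation*}
To close the argument I would introduce $Z(t) := y(t) + \int_0^t y(s)\,ds$, observe that $Z'(t) \leq (\Psi(t)+1+C)\,Z(t)$ and $Z(0)=0$, and conclude by Gronwall that $Z\equiv 0$, hence $w\equiv 0$. The main obstacle is the memory integral produced by the nonlocal-in-time term $\int_0^t h(s) u\,ds$, since after differentiation it leaves a genuine history contribution in the estimate; this is overcome precisely by this Gronwall-with-memory trick, which the $L^\infty$ bound on $u$ makes available.
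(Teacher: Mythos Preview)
Your argument is correct and follows the paper's proof almost verbatim: subtract the two equations, test with $w=u_1-u_2$, use $\alpha\ge a$, $\beta\ge1$ for coercivity, the Lipschitz bound on $\alpha$ together with Young's inequality to absorb $\||\nabla w|\|_2$ at the cost of $C\||\nabla u_2|\|_2^2\,|w|_2^2\in L^1(0,T)$, and the $L^\infty$ bound $|u_2|\le K$ plus the Lipschitz bound on $\beta$ for the memory term.

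The only technical difference is how the history contribution is closed. The paper observes the exact identity
\[
\int_\Omega |w(x,t)|\int_0^t|w(x,s)|\,ds\,dx \;=\; \frac{1}{2}\,\frac{d}{dt}\int_\Omega\Big(\int_0^t|w(x,s)|\,ds\Big)^2 dx,
\]
integrates in $t$ first, bounds $\big(\int_0^t|w|\,ds\big)^2\le t\int_0^t|w|^2\,ds$, and is then in the classical integral form $y(t)\le\int_0^t\Phi(s)y(s)\,ds$ with $\Phi\in L^1(0,T)$, so ordinary Gronwall finishes the job without any auxiliary function. Your route via $Z(t)=y(t)+\int_0^t y(s)\,ds$ and $Z'\le(\Psi+1+C)Z$ is equally valid and perhaps more flexible if the memory kernel were less structured, but the paper's perfect-derivative trick is a touch cleaner here.
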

\begin{proof}
Let $u_1, u_2$ be two solutions to \eqref{1.1}. By subtraction one gets 
\begin{equation*}
\begin{aligned}
\frac{d}{dt} (u_1-u_2) &-  \alpha \big (\int_\Om g(x)u_1(x,t) dx \big )\Delta (u_1-u_2) + \beta\big(\int_0^t h(s)u_1(x,s)ds \big) (u_1-u_2) \cr
&= \Big(\alpha \big (\int_\Om g(x)u_1(x,t) dx \big )-  \alpha \big( \int_\Om g(x)u_2(x,t) dx \big)\Big)\Delta u_2 \cr
&~~~~~~~~~~~~ -  \{\beta\big(\int_0^t h(s)u_1(x,s)ds\big) - \beta\big(\int_0^t h(s)u_2(x,s)ds\big) \}u_2.\\
\end{aligned}
\end{equation*}
Multiplying by $(u_1-u_2)$ and integrating on $\Om$ we get 
\begin{equation*}
\begin{aligned}
\frac{1}{2}\frac{d}{dt} |u_1-u_2|_2^2 &+  \alpha \big (\int_\Om g(x)u_1(x,t) dx \big )\int_\Om |\nabla (u_1-u_2)|^2dx  \cr &
~~~~~~~~~~~~~~~~~~~+\int_\Om \beta\big(\int_0^t h(s)u_1(x,s)ds \big) (u_1-u_2)^2dx \cr
&= -\int_\Om \Big(\alpha \big (\int_\Om g(x)u_1(x,t) dx \big )-  \alpha \big( \int_\Om g(x)u_2(x,t) dx \big)\Big)\nabla u_2\cdot \nabla (u_1-u_2)dx  \cr
&~~~~~~~~~~~~ -  \int_\Om \{\beta\big(\int_0^t h(s)u_1(x,s)ds\big) - \beta\big(\int_0^t h(s)u_2(x,s)ds\big) \}u_2(u_1-u_2)dx.\\
\end{aligned}
\end{equation*}
By \eqref{2.2} and since $\beta \geq 1$ we derive 

\begin{equation*}
\begin{aligned}
\frac{1}{2}\frac{d}{dt} |u_1-u_2|_2^2 &+ a\int_\Om |\nabla (u_1-u_2)|^2dx  +\int_\Om  (u_1-u_2)^2dx \cr
&\leq \int_\Om |\alpha \big (\int_\Om g(x)u_1(x,t) dx \big )-  \alpha \big( \int_\Om g(x)u_2(x,t) dx \big)||\nabla u_2|| \nabla (u_1-u_2)|dx  \cr
&~~~~~~~~~~~~+  \int_\Om |\{\beta\big(\int_0^t h(s)u_1(x,s)ds\big) - \beta\big(\int_0^t h(s)u_2(x,s)ds\big) \}||u_2||(u_1-u_2)|dx,\cr
\end{aligned}
\end{equation*}
which implies
\begin{equation*}
\begin{aligned}
\frac{1}{2}\frac{d}{dt} |u_1-u_2|_2^2 &+ a\int_\Om |\nabla (u_1-u_2)|^2dx  +\int_\Om  (u_1-u_2)^2dx \cr
&\leq \int_\Om C_\alpha |\int_\Om g(x)(u_1(x,t)-u_2(x,t) )dx| |\nabla u_2|| \nabla (u_1-u_2)|dx  \cr
&~~~~~~~~~~~~ + \int_\Om C_\beta |\int_0^t h(s)(u_1(x,s) -u_2(x,s))ds||u_2||(u_1-u_2)|dx \cr
&\leq \int_\Om C_\alpha g_\infty \Big(\int_\Om |u_1(x,t)-u_2(x,t)|dx\Big) |\nabla u_2|| \nabla (u_1-u_2)|dx  \cr
&~~~~~~~~~~~~ + \int_\Om C_\beta h_\infty \int_0^t |u_1(x,s) -u_2(x,s)|ds|u_2(x,t)||(u_1-u_2)(x,t)|dx \cr
\end{aligned}
\end{equation*}
where $g_\infty$, $h_\infty$ denotes the $L^\infty(\Om)$ and $L^\infty(0,T)$ norms of $g$ and $h$. Now we use \eqref{3.1} and the young inequality 
\begin{equation*}
ab\leq \epsilon a^2 + C_\epsilon b^2
\end{equation*}
to get 
\begin{equation*}
\begin{aligned}
\frac{1}{2}\frac{d}{dt} |u_1-u_2|_2^2 &+ a\int_\Om |\nabla (u_1-u_2)|^2dx  +\int_\Om  (u_1-u_2)^2dx \cr
&\leq \int_\Om C_\epsilon\Big\{C_\alpha  g_\infty \Big(\int_\Om |u_1(x,t)-u_2(x,t)|dx\Big) |\nabla u_2|\Big\}^2 +\epsilon | \nabla (u_1-u_2)|^2dx  \cr
&~~~~~~~~~~~~+  \int_\Om C_\beta h_\infty K \frac{1}{2}\frac{d}{dt} \Big(\int_0^t |u_1(x,s) -u_2(x,s)|ds\Big)^2dx. \cr
\end{aligned}
\end{equation*}
Choosing $\epsilon=\frac{a}{2}$ it comes
\begin{equation*}
\begin{aligned}
\frac{1}{2}\frac{d}{dt} |u_1-u_2|_2^2 &+ \frac{a}{2}\int_\Om |\nabla (u_1-u_2)|^2dx  +\int_\Om  (u_1-u_2)^2dx \cr
&\leq \int_\Om C_\epsilon C_\alpha^2  g_\infty^2 |\nabla u_2|^2\Big(\int_\Om |u_1(x,t)-u_2(x,t)|dx\Big)^2 dx  \cr
&~~~~~~~~~~~~ + \int_\Om C_\beta h_\infty K \frac{1}{2}\frac{d}{dt} \Big(\int_0^t |u_1(x,s) -u_2(x,s)|ds\Big)^2dx \cr
&\leq \int_\Om C_\epsilon C_\alpha^2  g_\infty^2 |\nabla u_2|^2|\Om| |u_1-u_2|_2^2 dx  \cr
&~~~~~~~~~~~~  +\int_\Om C_\beta h_\infty K \frac{1}{2}\frac{d}{dt} \Big(\int_0^t |u_1(x,s) -u_2(x,s)|ds\Big)^2dx. \cr
\end{aligned}
\end{equation*}
We used H\"older's inequality, $|\Om|$ denotes the measure of $\Om$. Thus we obtain
 \begin{equation*}
\begin{aligned}
\frac{1}{2}\frac{d}{dt} |u_1-u_2|_2^2 &+ \frac{a}{2}\int_\Om |\nabla (u_1-u_2)|^2dx  +\int_\Om  (u_1-u_2)^2dx \cr
&\leq C_\epsilon C_\alpha^2  g_\infty^2 |\nabla u_2|_2^2|\Om| |u_1-u_2|_2^2   \cr
&~~~~~~~~~~~~ + \int_\Om C_\beta h_\infty K \frac{1}{2}\frac{d}{dt} \Big(\int_0^t |u_1(x,s) -u_2(x,s)|ds\Big)^2dx. \cr
\end{aligned}
\end{equation*}
Integrating between 0 and $t$ we derive
 \begin{equation*}
\begin{aligned}
 |u_1-u_2|_2^2 &\leq 2\int_0^t C_\epsilon C_\alpha^2  g_\infty^2 |\nabla u_2|_2^2|\Om|  |u_1-u_2|_2^2 dt  \cr
&~~~~~~~~~~~~ + \int_\Om C_\beta h_\infty K\Big(\int_0^t |u_1(x,s) -u_2(x,s)|ds\Big)^2dx \cr
&\leq 2\int_0^t C_\epsilon C_\alpha^2  g_\infty^2 |\nabla u_2|_2^2|\Om|  |u_1(x,t)-u_2(x,t)|_2^2dt  \cr
&~~~~~~~~~~~~ + \int_\Om C_\beta h_\infty Kt\int_0^t |u_1(x,s) -u_2(x,s)|^2dsdx \cr
&= \int_0^t \Big( 2C_\epsilon C_\alpha^2  g_\infty^2  |\Om| |\nabla u_2|_2^2+ C_\beta h_\infty K t \Big) |u_1-u_2|_2^2 dt.
\end{aligned}
\end{equation*}
Since $\Big(2C_\epsilon C_\alpha^2  g_\infty^2  |\Om| |\nabla u_2|_2^2+ C_\beta h_\infty K t \Big) \in L^1(0,T)$ the result follows from the Gronwall inequa\-lity.
\end{proof}
\section{Stationary problem} 
\vskip .4 cm  
In this section we consider $u$ solution to \eqref{1.1} and we assume 
 \begin{equation}\label{4.1}
 f, u_0, g, h\geq 0.
  \end{equation}
  Moreover we assume that 
   \begin{equation}\label{4.2}
\beta (z) ~~\text{admits a limit when } z\to + \infty.
  \end{equation}
First notice that \eqref{4.1} implies that $u \geq 0$. Indeed multiplying \eqref{1.1} by $- u^-$ we get 
   \begin{equation*}
\frac{1}{2} \frac{d}{dt} |u^-|_2^2 + \alpha (\int_\Om g ~u dx) \int_\Om |\nabla u^- |^2 dx +  \int_\Om \beta (\int_0^t h~uds)(u^-)^2 dx = - (f, u^-) \leq 0.
  \end{equation*}
Since $\alpha, \beta$ are positive we get 
 \begin{equation*}
 \frac{1}{2} \frac{d}{dt} |u^-|_2^2 \leq 0
  \end{equation*}
  i.e. $u^-=0$ since $u^-(x,0)=0$. Since $u\geq 0$, then 
   \begin{equation*}
t\to  \int_0^t h(s)~u(x,s)~ds
   \end{equation*}
is nondecreasing in time and has a limit when $t\to +\infty$ for almost every $x \in \Om$ and so does 
   \begin{equation*}
\beta\Big(\int_0^t h(s)~u(x,s)~ds\Big).
   \end{equation*}
We denote by $\beta_\infty(x) \in L^\infty(\Om)$ this limit. Then the stationary problem associated to \eqref{1.1} is : find $u_\infty$ weak solution to 
   \begin{equation}\label{4.3}
\begin{cases} -\alpha\big( \int_\Om g(x)u_\infty(x)dx\big)\Delta u_\infty + \beta_\infty u_\infty = f(x) \text{ in } \Om,\\
u_\infty  = 0  \text{ on } \pt \Om.
\end{cases}
\end{equation}
For convenience we set 
  \begin{equation}\label{4.4}
\ell(u) = \int_\Om g(x)u(x)dx
  \end{equation}
and for any $\mu >0$ we denote by $u_\mu$ the weak solution to 
  \begin{equation}\label{4.5}
\begin{cases} -\mu \Delta u_\mu + \beta_\infty u_\mu = f(x) \text{ in } \Om,\\
u_\mu  = 0  \text{ on } \pt \Om.
\end{cases}
\end{equation}
As usual, solving a problem like \eqref{4.3} reduces to solve an equation in $\R$ (see \cite{Ro}, \cite{Chang}). Here arguing on $\ell(u)$ or  $\alpha(\ell(u))$ offers two different equations. Indeed we have 
\begin{theorem} The mapping $u \to \ell(u)$ is a one-to-one mapping from the set of solutions to \eqref{4.3} into the set of solutions of the equation in $\R$
\begin{equation}\label{4.6}
\mu = \ell(u_{\alpha(\mu)}).
\end{equation}
\end{theorem}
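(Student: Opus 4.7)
\medskip

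\noindent\textbf{Proof proposal.} The plan is to show three things in sequence: first that the map $u\mapsto \ell(u)$ indeed lands in the solution set of \eqref{4.6}, second that it is injective, and third (for completeness) that it is surjective. All three will be consequences of the fact that the linear elliptic problem \eqref{4.5} has a unique weak solution $u_\mu\in H_0^1(\Om)$ for every $\mu>0$, which holds by Lax--Milgram since $\beta_\infty\geq 1>0$ (a consequence of $\beta\geq 1$, preserved in the limit).

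First I would take any $u$ solving \eqref{4.3} and set $\mu=\ell(u)$. Then \eqref{4.3} becomes exactly \eqref{4.5} with the coefficient $\alpha(\mu)$ in front of $-\Delta$, so by uniqueness of the solution to \eqref{4.5} one has $u=u_{\alpha(\mu)}$. Applying $\ell$ gives $\mu=\ell(u)=\ell(u_{\alpha(\mu)})$, i.e.\ $\mu$ solves \eqref{4.6}; this shows the map is well defined into the target set.

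For injectivity, suppose $u_1,u_2$ are both solutions to \eqref{4.3} with $\ell(u_1)=\ell(u_2)=\mu$. Then each $u_i$ satisfies $-\alpha(\mu)\Delta u_i+\beta_\infty u_i=f$ with zero boundary data, and the uniqueness for \eqref{4.5} forces $u_1=u_2=u_{\alpha(\mu)}$. For surjectivity (needed to make the correspondence a true bijection and to make the reduction to an $\R$-equation useful), given $\mu$ solving \eqref{4.6}, I would set $u:=u_{\alpha(\mu)}$. By definition of $u_{\alpha(\mu)}$ and by \eqref{4.6} we have $\ell(u)=\mu$, hence $\alpha(\ell(u))=\alpha(\mu)$, so the equation defining $u_{\alpha(\mu)}$ coincides with \eqref{4.3}. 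Thus $u$ solves \eqref{4.3} and $\ell(u)=\mu$.

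No step presents any real obstacle; the only thing to be careful about is invoking uniqueness for \eqref{4.5}, which requires checking $\beta_\infty\geq 1$ (inherited from $\beta\geq 1$ via the monotone limit construction of $\beta_\infty$ recalled just before the theorem). Everything else is an algebraic manipulation of the two problems.
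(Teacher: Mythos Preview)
Your argument is correct and coincides with the paper's: both identify a solution $u$ of \eqref{4.3} with $u_{\alpha(\ell(u))}$ via uniqueness of the linear problem \eqref{4.5}, and then read off that $\ell(u)$ solves \eqref{4.6}, that the map is injective, and (conversely) that every root of \eqref{4.6} comes from a solution of \eqref{4.3}. Your extra justification of uniqueness for \eqref{4.5} is welcome, though note that in Section~4 the standing assumption is only $\beta>0$ (not $\beta\geq 1$), so coercivity really comes from the $-\mu\Delta$ term and Poincar\'e; this does not affect your proof.
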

\begin{proof} Suppose that $u_\infty$ is solution to \eqref{4.3}. Then, with our notation for $u_\mu$
\begin{equation*}
u_\infty = u_{\alpha(\ell(u_\infty))}
  \end{equation*}
  this implies
  \begin{equation*}
\ell(u_\infty) = \ell(u_{\alpha(\ell(u_\infty))})
  \end{equation*}
i.e.
$\ell(u_\infty)$ is solution to \eqref{4.6}. Conversely, suppose that $\mu$ is solution to \eqref{4.6}. Then, $u_{\alpha(\mu)}$ satisfies 
  \begin{equation*}
\begin{cases} -\alpha(\mu) \Delta u_{\alpha(\mu)}+ \beta_\infty u_{\alpha(\mu)}= f(x) \text{ in } \Om,\\
u_{\alpha(\mu)} = 0  \text{ on } \pt \Om.
\end{cases}
\end{equation*}
Since, by \eqref{4.6}, $\alpha(\mu) =\alpha( \ell(u_{\alpha(\mu)}))$, $u_{\alpha(\mu)}$ is solution to \eqref{4.3}. The injectivity of the map $u \to \ell(u)$ is due to the fact that if $\ell(u_1) = \ell(u_2)$ when $u_1$ and $u_2$ are solutions to \eqref{4.3} then clearly $u_1=u_2$. This completes the proof of the theorem.
\end{proof}

Similarly we have 
\begin{theorem} The mapping $u \to \alpha(\ell(u))$ is a one-to-one mapping from the set of solutions to \eqref{4.3} into the set of solutions of the equation in $\R$
\begin{equation}\label{4.7}
\mu = \alpha(\ell(u_{\mu})).
\end{equation}
\end{theorem}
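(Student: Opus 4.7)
The plan is to follow the same template as the previous theorem, using the characterization of solutions of \eqref{4.3} through the linear auxiliary problem \eqref{4.5}. The key fact to keep exploiting is that any solution $u_\infty$ of \eqref{4.3} coincides with $u_{\alpha(\ell(u_\infty))}$, since the linear Dirichlet problem \eqref{4.5} has a unique weak solution for each $\mu>0$.

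First, I would show that $u \to \alpha(\ell(u))$ sends solutions of \eqref{4.3} into solutions of \eqref{4.7}. If $u_\infty$ solves \eqref{4.3} and one sets $\mu := \alpha(\ell(u_\infty))$, then by uniqueness for \eqref{4.5} one has $u_\infty = u_\mu$. Applying $\alpha \circ \ell$ to both sides yields
\[
\mu = \alpha(\ell(u_\infty)) = \alpha(\ell(u_\mu)),
\]
which is precisely \eqref{4.7}. Conversely, if $\mu$ solves \eqref{4.7}, then $u_\mu$ satisfies, by \eqref{4.5},
\[
-\mu \Delta u_\mu + \beta_\infty u_\mu = f \text{ in } \Om, \qquad u_\mu = 0 \text{ on } \pt \Om,
\]
and substituting $\mu = \alpha(\ell(u_\mu))$ on the left shows that $u_\mu$ is a solution of \eqref{4.3}.

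For injectivity of $u \to \alpha(\ell(u))$ on the set of solutions of \eqref{4.3}, suppose $u_1,u_2$ are two such solutions with $\alpha(\ell(u_1)) = \alpha(\ell(u_2)) =: \mu$. Then each $u_i$ solves the linear problem \eqref{4.5} with parameter $\mu$, so by uniqueness $u_1 = u_\mu = u_2$. The point worth flagging, and the only subtlety compared with the previous theorem, is that injectivity does \emph{not} require $\alpha$ itself to be one-to-one: the injectivity of $u \mapsto \alpha(\ell(u))$ on solutions of \eqref{4.3} comes for free from the rigidity of \eqref{4.5}, exactly as in the preceding theorem with $\ell$ in place of $\alpha \circ \ell$.

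There is no real obstacle beyond noting this last point; the argument is essentially a direct transcription of the proof of the previous theorem, with $\mu$ playing the role of $\alpha(\ell(u_\infty))$ rather than $\ell(u_\infty)$.
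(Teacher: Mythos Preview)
Your proposal is correct and follows essentially the same approach as the paper: you use $u_\infty = u_{\alpha(\ell(u_\infty))}$ to map solutions of \eqref{4.3} to solutions of \eqref{4.7}, invert via $u_\mu$, and deduce injectivity from uniqueness in the linear problem \eqref{4.5}. Your explicit remark that injectivity does not require $\alpha$ to be one-to-one is a helpful clarification that the paper leaves implicit.
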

\begin{proof} Suppose that $u_\infty$ is solution to \eqref{4.3}. Then, with our notation for $u_\mu$
\begin{equation*}
u_\infty = u_{\alpha(\ell(u_\infty))}
  \end{equation*}
  this implies that 
  \begin{equation*}
\alpha(\ell(u_\infty)) = \alpha(\ell(u_{\alpha(\ell(u_\infty))})
  \end{equation*}
i.e. $\alpha(\ell(u_\infty)) $ is solution to \eqref{4.7}. Conversely, suppose that $\mu$ is solution to \eqref{4.7}. Then $u_\mu$ is solution to 
  \begin{equation*}
\begin{cases} -\alpha(\ell(u_\mu)) \Delta u_\mu+ \beta_\infty u_\mu= f(x) \text{ in } \Om,\\
u_\mu= 0  \text{ on } \pt \Om,
\end{cases}
\end{equation*}
i.e. $u_\mu$ is solution to \eqref{4.3}. To prove the injectivity of the map $u \to \alpha(\ell(u))$ one has just to notice that if $ \alpha(\ell(u_1)) =  \alpha(\ell(u_2))$ when $u_1, u_2$ are solutions to \eqref{4.3} then clearly $u_1=u_2 = 
u_{\alpha(\ell(u_i))}$. This completes the proof of the theorem.
  \end{proof}

Then we can now show 
\begin{theorem} Suppose that for some constants $\alpha_0, \alpha_1$ one has  
\begin{equation}\label{4.8}
0< \alpha_0\leq \alpha\leq  \alpha_1,
\end{equation}
then the problem \eqref{4.3} admits at least one solution.
\end{theorem}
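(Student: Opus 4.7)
The plan is to combine the equivalence given by Theorem 4.2 (the correspondence between solutions of \eqref{4.3} and solutions of the scalar equation $\mu=\alpha(\ell(u_\mu))$) with a fixed point argument in one real variable. Define
\begin{equation*}
F:[\alpha_0,\alpha_1]\to\R,\qquad F(\mu)=\alpha(\ell(u_\mu)),
\end{equation*}
where $u_\mu$ is the unique weak solution to the linear problem \eqref{4.5} (well posed by Lax--Milgram since $\mu\geq\alpha_0>0$ and $\beta_\infty\geq 0$, this last fact following from $\beta\geq 1$ in the approximation). By the assumption \eqref{4.8}, the range of $\alpha$ is contained in $[\alpha_0,\alpha_1]$, so $F$ maps $[\alpha_0,\alpha_1]$ into itself, and any fixed point of $F$ yields a solution of \eqref{4.3} via Theorem 4.2.

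The first real step is to establish continuity of the map $\mu\mapsto u_\mu$ from $[\alpha_0,\alpha_1]$ into $H^1_0(\Om)$. Testing \eqref{4.5} with $u_\mu$ gives the uniform bound $\mu\bigl||\nabla u_\mu|\bigr|_2^2\leq \langle f,u_\mu\rangle$, which together with $\mu\geq\alpha_0$ provides a constant $M$ such that $\bigl||\nabla u_\mu|\bigr|_2\leq M$ for all $\mu\in[\alpha_0,\alpha_1]$. Then, subtracting the equations for $u_\mu$ and $u_{\mu'}$ one has
\begin{equation*}
-\mu\Delta(u_\mu-u_{\mu'})+\beta_\infty(u_\mu-u_{\mu'})=(\mu-\mu')\Delta u_{\mu'},
\end{equation*}
and testing with $u_\mu-u_{\mu'}$ together with integration by parts and Cauchy--Schwarz yields $\bigl||\nabla(u_\mu-u_{\mu'})|\bigr|_2\leq (M/\alpha_0)|\mu-\mu'|$. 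So $\mu\mapsto u_\mu$ is even Lipschitz into $H^1_0(\Om)$, and in particular into $L^2(\Om)$.

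Next, since $g\in L^\infty(\Om)$, the linear functional $\ell$ is continuous on $L^2(\Om)$, and $\alpha\in C(\R)$ by \eqref{2.1}; hence $F=\alpha\circ\ell\circ(\mu\mapsto u_\mu)$ is a continuous map from the compact interval $[\alpha_0,\alpha_1]$ into itself. The Brouwer fixed point theorem (equivalently, the intermediate value theorem applied to $\mu\mapsto F(\mu)-\mu$, which is nonnegative at $\mu=\alpha_0$ and nonpositive at $\mu=\alpha_1$) then produces a fixed point $\mu^\ast\in[\alpha_0,\alpha_1]$. By Theorem 4.2, $u_{\mu^\ast}$ is a solution of \eqref{4.3}, which is what we wanted.

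The main obstacle is the continuity of $\mu\mapsto u_\mu$: it is exactly here that the uniform lower bound $\alpha_0>0$ in \eqref{4.8} is used, both to keep the linear elliptic problems \eqref{4.5} uniformly coercive and to prevent the gradient estimate of $u_{\mu'}$ from blowing up as $\mu'\to 0$. Everything else, including the reduction to a scalar equation and the fixed point step, is then routine.
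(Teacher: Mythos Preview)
Your proof is correct and follows exactly the same route as the paper: reduce to the scalar equation \eqref{4.7} via Theorem~4.2, observe that $\mu\mapsto\alpha(\ell(u_\mu))$ takes values in $[\alpha_0,\alpha_1]$, and invoke the intermediate value theorem to find a fixed point. The paper's proof is a single sentence (``the straight line $y=\mu$ is cutting the curve $y=\alpha(\ell(u_\mu))$'') that tacitly assumes the continuity of $\mu\mapsto u_\mu$ which you have taken the trouble to verify; your version is thus a fleshed-out form of the same argument rather than a different one.
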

  \begin{proof} Due to \eqref{4.8} the strait line $y =\mu$ is cutting the curve $ y = \alpha(\ell(\mu))$ and the result follows from the theorem 4.2.
\end{proof}

\begin{remark} Of course \eqref{4.7} can have several solutions and even an infinity. In the case of a single solution it would be interesting and non trivial to show the convergence of $u(t)$ toward $u_\infty$.  In the next paragraph we address a simple case to show what is on stake. We made it voluntary simple in a didactic spirit.
\end{remark}

Let us suppose that $g$ is an eigenvalue of the Dirichlet problem i.e. that for some $\lambda >0$, $g$ satisfies in a weak sense

\begin{equation}\label{4.9}
-\Delta g = \lambda g \text{ in } \Om,~~ g = 0  \text{ on } \pt\Om.
\end{equation}

Then we have 
\begin{theorem} Let $g$ be solution to \eqref{4.9}. Suppose that $\beta$ is a positive constant and that the equation  
\begin{equation}\label{4.10}
(\lambda \alpha(\mu) + \beta)\mu = (f,g) >0 
\end{equation}
admits a unique solution. Then if $u(x,t)$ is solution to \eqref{1.1} and $u_\infty$ solution to \eqref{4.3} one has 
\begin{equation}\label{4.11}
|u(x,t) - u_\infty |_2 \to 0  \text{ when } t \to +\infty.
\end{equation}
\end{theorem}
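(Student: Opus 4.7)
The plan is to exploit two simplifications available in this setting: since $\beta$ is constant the memory term disappears and $\beta_\infty = \beta$, while the eigenfunction identity $-\Delta g = \lambda g$ lets us close an ODE for the one-dimensional quantity $\mu(t) := \ell(u(\cdot,t)) = \int_\Om g\,u\,dx$. Taking $v = g \in H_0^1(\Om)$ in the weak formulation \eqref{2.4} satisfied by $u$ and using the Green identity $\int_\Om \nabla u \cdot \nabla g\,dx = -\int_\Om u\,\Delta g\,dx = \lambda\,\ell(u)$ yields
\begin{equation*}
\mu'(t) + (\lambda\,\alpha(\mu(t)) + \beta)\,\mu(t) = (f,g).
\end{equation*}
The same computation performed on \eqref{4.3} gives $(\lambda\alpha(\mu^*) + \beta)\mu^* = (f,g)$ with $\mu^* := \ell(u_\infty)$, so by the uniqueness hypothesis $\mu^*$ is the unique solution of \eqref{4.10}.

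Next I establish that $\mu(t) \to \mu^*$. Writing the ODE as $\mu' = F(\mu)$ with $F(\mu) := (f,g) - (\lambda\alpha(\mu) + \beta)\mu$, one has $F(0) = (f,g) > 0$ and $F(\mu) \to -\infty$ as $\mu \to +\infty$ thanks to \eqref{2.2}. Together with uniqueness of the zero $\mu^*$ and continuity of $F$, this forces $F > 0$ on $[0,\mu^*)$ and $F < 0$ on $(\mu^*,+\infty)$. Since $u, g \geq 0$ one has $\mu(t) \geq 0$ for all $t$, so $\mu$ is monotone and bounded and its limit, being necessarily a zero of $F$, equals $\mu^*$.

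Third, I reduce the convergence $u(\cdot,t) \to u_\infty$ in $L^2(\Om)$ to a linear differential inequality. Subtracting \eqref{4.3} from \eqref{1.1} gives
\begin{equation*}
(u-u_\infty)_t - \alpha(\mu(t))\,\Delta(u-u_\infty) + \beta(u-u_\infty) = \bigl(\alpha(\mu(t)) - \alpha(\mu^*)\bigr)\,\Delta u_\infty.
\end{equation*}
Testing with $u - u_\infty \in H_0^1(\Om)$, integrating by parts on the right, and using Young's inequality to absorb half the gradient term produces, with $a$ from \eqref{2.2},
\begin{equation*}
\frac{d}{dt}|u-u_\infty|_2^2 + 2\beta\,|u-u_\infty|_2^2 \leq \frac{(\alpha(\mu(t)) - \alpha(\mu^*))^2}{a}\,|\nabla u_\infty|_2^2 =: \varepsilon(t),
\end{equation*}
where $\varepsilon(t) \to 0$ by continuity of $\alpha$ and the previous step. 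Multiplying by $e^{2\beta t}$, integrating from $0$ to $t$, and splitting the resulting integral at any $T_0$ past which $\varepsilon(s) < \delta$ yields $\limsup_{t\to\infty}|u(t)-u_\infty|_2^2 \leq \delta/(2\beta)$ for every $\delta > 0$, which is \eqref{4.11}.

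The main obstacle is the ODE step in the second paragraph: the uniqueness hypothesis on \eqref{4.10} is precisely what rules out the possibility that $\mu(t)$ oscillates between distinct equilibria of $F$, and hence guarantees the crucial input $\varepsilon(t) \to 0$ for the final linear stability estimate. Everything else is either a direct computation (reducing the PDE to the scalar ODE via the eigenfunction) or a standard energy/Gronwall argument.
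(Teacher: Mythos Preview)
Your argument is correct. The first two steps---deriving the scalar ODE for $\mu(t) = \ell(u(\cdot,t))$ by testing against the eigenfunction $g$, and showing $\mu(t) \to \mu^* = \ell(u_\infty)$ via the sign of $F(\mu) = (f,g) - (\lambda\alpha(\mu)+\beta)\mu$---coincide exactly with the paper's proof. The difference lies in the final step: the paper simply declares that it suffices to prove $\ell(u(\cdot,t)) \to \ell(u_\infty)$, delegating the passage from convergence of the nonlocal coefficient to $L^2$-convergence of $u$ to a result in \cite{CNew}. You instead supply a direct energy argument: subtracting the stationary equation, testing with $u-u_\infty$, absorbing half the gradient via Young's inequality, and obtaining a linear differential inequality with forcing $\varepsilon(t)\to 0$, from which \eqref{4.11} follows by a Gronwall-type estimate. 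Your route is more self-contained and makes explicit the mechanism (linear stability of the limiting problem combined with $\alpha(\mu(t))\to\alpha(\mu^*)$) at the cost of a few extra lines; the paper's route is terser but relies on an external reference.
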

  \begin{proof} It is enough to show (see \cite{CNew}) that $\ell(u(x,t)) \to \ell(u_\infty)$ when $t \to \infty$. Multiplying the equation \eqref{1.1} by $g$ and integrating on $\Omega$ one gets 
  \begin{equation*}
\frac{d}{dt}(u,g) + \alpha(\ell(u))\int_\Om \nabla u \nabla g dx + \beta(u,g) = (f,g)
\end{equation*}
i.e. using the definition of $g$ and $\ell$ it comes 
 \begin{equation*}
\frac{d}{dt}\ell(u)+ \lambda \alpha(\ell(u))\ell(u)+ \beta \ell(u) = (f,g).
\end{equation*}
Denote by $\mu_\infty$ the unique solution to \eqref{4.10}. Since we assume $(f,g) >0$ one has $\mu_\infty >0$ and 
$(\lambda \alpha(\mu) + \beta)\mu < (f,g) $ for $\mu < \mu_\infty $. Suppose that 
 \begin{equation*}
\ell(u_0) < \mu_\infty.
\end{equation*}
Since $\ell(u)$ is solution to the differential equation 
 \begin{equation*}
\frac{d}{dt}\ell(u)=  (f,g) -(\lambda \alpha(\ell(u))+ \beta) \ell(u) 
\end{equation*}
$\ell(u)$ is increasing and of course converging toward $\mu_\infty$. Similarly $\ell(u_0) >\mu_\infty$ implies that $\ell(u) $ is decreasing toward $\mu_\infty$. This completes the proof of the theorem. 
\end{proof}
\begin{remark}
In the case that we just considered one could describe the asymptotic behaviour of $u$ using the same argument when the equation \eqref{4.10} admits different isolated solutions. We leave the proof to the reader.
\end{remark}

\subsection*{Acknowledgements}
The financial support  of Gr. Roma1, IV - Mathematical Methods in NonLinear Physics National Institute for Nuclear Physics (I.N.F.N.), Rome, Italy, of \textsc{Sapienza}  University of Rome, Italy  and of the National Mathematical Physics Group (G.N.F.M.) - I.N.d.A.M., is gratefully acknowledged. \\
Thanks are due also to the PRIN 2022 project
``Mathematical Modelling of Heterogeneous Systems (MMHS)",
financed by the European Union - Next Generation EU,
CUP B53D23009360006, Project Code 2022MKB7MM, PNRR M4.C2.1.1.\\
M. Chipot thanks, Sapienza University of Rome  and Dipartimento di Scienze di Base e Applicate per l'Ingegneria,
for the kind hospitality during this research work.

\end{document}